\newcommand*{\eps}{\varepsilon}
\renewcommand*{\ge}{\geqslant}
\renewcommand*{\le}{\leqslant}
\newcommand*{\bigO}{\mathcal{O}}
\newcommand*{\abs}[1]{\mathopen{}\left\vert #1 \right\vert\mathclose{}}
\newcommand*{\ssize}[1]{\mathopen{}\left\vert #1 \right\vert\mathclose{}}
\newtheorem*{theorem*}{Theorem}
\newtheorem{theorem}{Theorem}
\newtheorem{corollary}[theorem]{Corollary}
\newtheorem{defn}[theorem]{Definition}
\newtheorem*{defn*}{Definition}
\newtheorem*{prop*}{Proposition}
\newtheorem*{conj*}{Conjecture}
\newtheorem*{fact*}{Fact}
\begin{document}
\title{The \texorpdfstring{$\chi$}{χ}-Ramsey problem for triangle-free graphs}
\date{\today}

\author[E.\ Davies]{Ewan Davies}
\address{Department of Computer Science, University of Colorado Boulder, USA}
\email{research@ewandavies.org}
	
\author[F.\ Illingworth]{Freddie Illingworth}
\address{DPMMS, University of Cambridge, UK}
\curraddr{Mathematical Institute, University of Oxford, UK}
\email{illingworth@maths.ox.ac.uk}
	
\subjclass[2020]{Primary: 05C15, 05C35; Secondary: 05D10}
\keywords{Colourings and list colourings, Ramsey problems, extremal graph theory}

\begin{abstract}
	In 1967, Erd\H{o}s asked for the greatest chromatic number, $f(n)$, amongst all $n$-vertex, triangle-free graphs.
	An observation of Erd\H{o}s and Hajnal together with Shearer's classical upper bound for the off-diagonal Ramsey number $R(3, t)$ shows that $f(n)$ is at most $(2 \sqrt{2} + o(1)) \sqrt{n/\log n}$.
	
	We improve this bound by a factor $\sqrt{2}$, as well as obtaining an analogous bound on the list chromatic number which is tight up to a constant factor.
	A bound in terms of the number of edges that is similarly tight follows, and these results confirm a conjecture of Cames van Batenburg, de Joannis de Verclos, Kang, and Pirot.
\end{abstract}

\maketitle

\section{Introduction}\label{sec:Intro}

The classical Ramsey question for triangle-free graphs asks for the value of $R(3, t)$\footnote{$R(3,t)$ is defined to be the least $n$ such that every $n$-vertex graph contains either a triangle or an independent set of size $t$.} or, equivalently, the smallest independence number amongst $n$-vertex triangle-free graphs.
Every graph $G$ satisfies $\alpha(G) \cdot \chi(G) \geqslant \lvert G \rvert$, which suggests a natural `$\chi$-Ramsey' question first asked by Erd\H{o}s~\cite{Erd67} in 1967. He asked for the greatest chromatic number of a triangle-free graph in terms of its number of vertices, $n$, or number of edges, $m$ (see also Appendix~B in the first edition of~\cite{ASE92}). Denoting the maximum for the vertex problem by $f(n)$, an early suggestion of the correct growth rate of $f(n)$ was given by Ajtai, Koml\'{o}s, and Szemer\'{e}di's~\cite{AKS80} upper bound $R(3, t) = \bigO(t^{2}/\log t)$, which shows that every $n$-vertex triangle-free graph has independence number at least $\Omega\bigl(\sqrt{n \log n}\bigr)$ and so \emph{Hall ratio}
\begin{equation*}
	\rho(G) = \max_{\varnothing \neq H \subseteq G} \frac{\ssize{H}}{\alpha(H)} = \bigO\bigl(\sqrt{n / \log n}\bigr).
\end{equation*}
The Hall ratio is a natural lower bound for the chromatic number. 
Erd\H{o}s and Hajnal~\cite{EH85} (see~\cite[pp.~124--5]{JT94} for details) noted that iteratively pulling out the large independent sets guaranteed by Ajtai, Koml\'{o}s, and Szemer\'{e}di's result and giving each one a different colour matches this bound. That is, $f(n) = \bigO(\sqrt{n/\log n})$. 
A well-known result of Shearer~\cite{She83} gives the best-known constant in the theorem of Ajtai, Koml\'{o}s, and Szemer\'{e}di, and this can be used to sharpen the bound to
\begin{equation*}
	f(n)\le (2\sqrt 2 + o(1))\sqrt{n/\log n}.
\end{equation*}
The classical Ramsey and $\chi$-Ramsey problems for triangle-free graphs were considered by Kim~\cite{Kim95a}, who constructed $n$-vertex, triangle-free graphs with independence number $\bigO(\sqrt{n \log n})$. 
Specifically for the $\chi$-Ramsey problem in triangle-free graphs, Kim proved that $f(n)\ge (1/9 - o(1)) \sqrt{n/\log n}$, and brought attention to the upper bound that follows from Shearer's result.
Improving Kim's lower bound, Fiz Pontiveros, Griffiths, and Morris~\cite{FPMS20} and Bohman and Keevash~\cite{BK21} followed the triangle-free process to its asymptotic end, showing that there are $n$-vertex triangle-free graphs with Hall ratio (and hence chromatic number) at least
$(1/\sqrt{2} - o(1)) \sqrt{n/ \log n}$, so there is a factor of four between these upper and lower bounds for $f(n)$.

A bound in terms of the number of edges was given by Poljak and Tuza~\cite{PT94} (see also~\cite{GT00,Nil00}), who showed that every $m$-edge triangle-free graph $G$ has chromatic number $\chi(G) = \bigO(m^{1/3}/(\log m)^{2/3})$. This is tight up to a constant factor due to Kim's construction or the triangle-free process. 
An improved constant follows from a more refined method due to Gimbel and Thomassen~\cite{GT00}, together with a theorem of Molloy that we discuss below. 
These improvements were observed in~\cite[Prop.~4.6]{CdKP20} which states the bound $\chi(G) \le (2^{1/3}3^{5/3}+o(1))m^{1/3}/(\log m)^{2/3}$.
Gimbel and Thomassen also showed that any triangle-free graph $G$ that can be embedded on an orientable or non-orientable surface of genus $g$ has $\chi(G) = \bigO(g^{1/3}/(\log g)^{2/3})$, and that this is tight up to a constant factor.

Cames van Batenburg, de Joannis de Verclos, Kang, and Pirot~\cite{CdKP20} recently highlighted the problems of tightening the asymptotic constants above, and of giving bounds for more refined graph colouring parameters including the fractional and list chromatic numbers.

\subsection{Definitions}\label{subsec:defs}

Our terminology is standard, but we briefly introduce the required notions of graph colouring here.
Recall that a \emph{$k$-colouring} of a graph is a partition of its vertex set into $k$ independent sets (colour classes). The \emph{chromatic number} of a graph $G$, denoted $\chi(G)$, is the least $k$ for which $G$ has a $k$-colouring.

A \emph{fractional colouring of weight at most $k$} is a probability distribution on the independent sets of a graph such that every vertex has probability at least $1/k$ of being in the random independent set. The \emph{fractional chromatic number}, $\chi_{f}(G)$, of a graph $G$ is the least $k$ for which $G$ has a fractional colouring of weight at most $k$. Equivalently, $\chi_{f}(G)$ is the solution to a fractional relaxation of the natural integer program that gives $\chi(G)$.

Given a graph $G$, a \emph{list assignment $L$} is an assignment of a list $L(v) \subseteq \mathbb{N}$ to each vertex $v$ of $G$. An \emph{$L$-colouring of $G$} is a colouring $c \colon V(G) \to \mathbb{N}$ that is consistent with $L$ (each vertex $v$ has $c(v) \in L(v)$) and proper ($c(u) \neq c(v)$ whenever $uv$ is an edge). The \emph{list chromatic number}, $\chi_{\ell}(G)$, of a graph $G$ is the least $k$ such that $G$ is $L$-colourable for any list assignment $L$ whose lists have size at least $k$.

It is straightforward to show that every $G$ has $\rho(G) \le \chi_{f}(G) \le \chi(G) \le \chi_{\ell}(G)$ by the definition of $\rho$, considering the uniform probability distribution over the colour classes in a $\chi(G)$-colouring of $G$, and by considering list assignments where every vertex is given the same list.

\subsection{Results}\label{subsec:results}

\Cref{table:upperbounds} summarises the best known upper bounds in terms of the number of vertices.
Given the difficulty of improving Shearer's Ramsey number bound, one should compare upper bounds for $\chi_{f}$, $\chi$, $\chi_{\ell}$ to his bound for $\rho$. 

\begin{table}
	\centering
	\caption{Upper bounds for $n$-vertex triangle-free graphs $G$}\label{table:upperbounds}
	\begin{tabular}{rlll}
		\toprule
		\multicolumn{1}{c}{Parameter} & \multicolumn{2}{c}{Previous bound} & \multicolumn{1}{c}{This work} \\
		\midrule
		$\rho(G)$ & $(\sqrt 2 + o(1)) \sqrt{n/\log n}$ & \cite{She83} & \\
		$\chi_{f}(G)$ & $(2 + o(1))\sqrt{n / \log n}$ & \cite{CdKP20} & \\
		$\chi(G)$ & $(2\sqrt 2 + o(1)) \sqrt{n / \log n}$ & \cite{She83,Kim95a} & $(2 + o(1))\sqrt{n / \log n}$\\
		$\chi_{\ell}(G)$ & $(2\sqrt 2 + o(1)) \sqrt{n}$ & \cite{CdKP20} & $(4\sqrt 2 + o(1))\sqrt{n / \log n}$\\
		\bottomrule
	\end{tabular}
\end{table}

Our first result improves the upper bound for chromatic number by a factor of $\sqrt{2}$, thus matching the bound for the fractional chromatic number established in~\cite{CdKP20}. 
We apply this new bound and a tactic of Gimbel and Thomassen~\cite{GT00} to improve the previous best bound in terms of the number of edges~\cite[Prop.~4.6]{CdKP20}.

\begin{theorem}\label{thm:chi}
	As $n \rightarrow \infty$, any triangle-free graph on $n$ vertices has chromatic number at most $(2 + o(1)) \sqrt{n / \log n}$.
	
	As $m \rightarrow \infty$, any triangle-free graph with at most $m$ edges has chromatic number at most $(3^{5/3} + o(1)) m^{1/3}/(\log m)^{2/3}$.
\end{theorem}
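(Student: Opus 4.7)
The central tool in both parts is Molloy's theorem, which for a triangle-free graph $G$ with maximum degree $\Delta$ guarantees $\chi_\ell(G) \le (1+o(1))\Delta/\log\Delta$. The plan is to pair this list-colouring bound with modest structural bookkeeping.

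For the vertex bound I would induct on $n$ with a dichotomy on $\Delta(G)$ at the threshold $\Delta_0 = \sqrt{n\log n}$. If $\Delta(G) \le \Delta_0$, Molloy's theorem directly yields $\chi(G) \le (1+o(1))\Delta_0/\log\Delta_0 = (2+o(1))\sqrt{n/\log n}$ since $\log\Delta_0 = \tfrac12\log n + O(\log\log n)$. Otherwise pick $v$ with $\deg(v) = \Delta > \Delta_0$; as $G$ is triangle-free, $N(v)$ is independent, so I would use $N(v)$ as a single colour class and apply induction to $G - N(v)$, a triangle-free graph on $n - \Delta$ vertices. Writing $g(n) = (2+\eps)\sqrt{n/\log n}$, one computes $g'(n) \sim (1+\eps/2)/\sqrt{n\log n}$, so by concavity $g(n) - g(n-\Delta) \ge \Delta\, g'(n) \ge 1 + \eps/2 - o(1) \ge 1$ once $n$ is large, giving $1 + g(n-\Delta) \le g(n)$ and closing the induction.

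For the edge bound I would introduce a degree threshold $D$ (to be optimised) and split $V(G) = V(H) \cup V(L)$ with $V(H) = \{v : \deg_G(v) \ge D\}$. Counting degrees gives $\ssize{V(H)} \le 2m/D$, so the already-proved first part applied to $H$ yields $\chi(H) \le (2+o(1))\sqrt{(2m/D)/\log(2m/D)}$. The subgraph $L$ is triangle-free with maximum degree less than $D$, so Molloy's theorem gives $\chi(L) \le (1+o(1))D/\log D$. Colouring $H$ and $L$ with disjoint palettes, $\chi(G) \le \chi(H) + \chi(L)$. Writing $D = \alpha\, m^{1/3}(\log m)^{1/3}$ and using $\log(2m/D) \sim \tfrac23\log m$ and $\log D \sim \tfrac13\log m$, the right-hand side becomes $\bigl(2\sqrt{3/\alpha} + 3\alpha + o(1)\bigr)m^{1/3}/(\log m)^{2/3}$, minimised at $\alpha = 3^{-1/3}$ with value $3 \cdot 3^{2/3} = 3^{5/3}$.

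The main obstacle is controlling the error terms honestly through the induction in the first part. Molloy's $(1+o(1))$ factor depends on $\Delta$ being large, so the regime $\Delta$ just above $\Delta_0$ must be handled with enough slack that the additive $+1$ per recursion step is absorbed uniformly. Base cases (very small $n$, or $\Delta$ too small for Molloy to apply) reduce to the trivial bound $\chi \le \Delta + 1$, which is negligible compared with $\sqrt{n/\log n}$. The optimisation in the edge bound is then routine calculus once the two ingredients are combined by disjoint palettes.
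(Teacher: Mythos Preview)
Your proposal is correct and follows essentially the same approach as the paper: the same degree threshold $\sqrt{n\log n}$, the same dichotomy with Molloy's theorem versus removing a large independent neighbourhood and inducting, the same concavity computation, and for the edge bound the same Gimbel--Thomassen split with the same optimised threshold $((m\log m)/3)^{1/3}$. The only cosmetic difference is that the paper tracks the $o(1)$ term as an explicit decaying function $A(x)$ rather than a fixed $\eps$, which makes the base cases of the induction absorb cleanly; you correctly flagged this as the main technical nuisance.
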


A particularly noticeable feature of the previous bounds in \Cref{table:upperbounds} is the upper bound for the list chromatic number, which does not have the same growth rate as the lower bound provided by the triangle-free process. We give a short argument to rectify this. 
As with the case of chromatic number, we use this new result to obtain a bound in terms of the number of edges that is tight up to a constant factor.

\begin{theorem}\label{thm:listchi}
	As $n \rightarrow \infty$, any triangle-free graph on $n$ vertices has list chromatic number at most $(4\sqrt 2 + o(1)) \sqrt{n / \log n}$.
	
	As $m \rightarrow \infty$, any triangle-free graph with at most $m$ edges has list chromatic number at most $(12\cdot 3^{2/3} + o(1)) m^{1/3}/(\log m)^{2/3}$.
\end{theorem}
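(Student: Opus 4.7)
The plan is to derive both bounds using Molloy's list chromatic number theorem, which states that $\chi_\ell(H) \le (1+o(1))\Delta(H)/\log\Delta(H)$ for every triangle-free graph $H$, combined with structural reductions exploiting triangle-freeness.

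For the vertex bound, I would set $\Delta_0 := 2\sqrt{2n\log n}$; Molloy's bound applied to any triangle-free graph of maximum degree at most $\Delta_0$ yields exactly $(4\sqrt 2 + o(1))\sqrt{n/\log n}$, so the real work is reducing to this bounded-degree regime. A vertex $v$ with $\deg(v) > \Delta_0$ has an independent neighbourhood $N(v)$ of size greater than $\Delta_0$ in the triangle-free graph $G$, which is substantial. The natural move is to colour $v$ from $L(v)$, then colour $N(v)$ vertex-by-vertex (easy since $N(v)$ is independent and only $c(v)$ is excluded initially), and recurse on $G - N[v]$ with each remaining vertex's list reduced by the colours used on its neighbours in $N[v]$. \emph{The main obstacle is controlling the cumulative list depletion across the recursion}: each pass removes at most one colour per remaining vertex per neighbour in $N[v]$, but iterating could destroy too many colours, so care is needed in how many high-degree passes are performed before Molloy's bound is applied to the residual low-degree graph.

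For the edge bound, I would combine the vertex bound with a Gimbel--Thomassen-style vertex split. Fix $\Delta_1$ on the order of $(m\log m)^{1/3}$ and let $V_1 := \{v \in V(G) : \deg_G(v) \ge \Delta_1\}$, $V_2 := V(G) \setminus V_1$. Then $|V_1| \le 2m/\Delta_1$, so the vertex bound just proven gives $\chi_\ell(G[V_1]) \le (4\sqrt 2 + o(1))\sqrt{|V_1|/\log|V_1|}$; meanwhile $G[V_2]$ has maximum degree less than $\Delta_1$, so Molloy gives $\chi_\ell(G[V_2]) \le (1+o(1))\Delta_1/\log\Delta_1$.

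To combine these into a list colouring of $G$, I would randomly partition the colour universe $C = C_1 \sqcup C_2$, placing each colour into $C_i$ independently with probabilities chosen to balance the two targets, and then list-colour $G[V_i]$ using the restricted lists $L(v) \cap C_i$. A Chernoff-type union bound guarantees that each restricted list retains enough colours with high probability, provided $|L(v)|$ exceeds $\chi_\ell(G[V_1]) + \chi_\ell(G[V_2])$ by a lower-order correction. Optimising $\Delta_1$ balances the two contributions and yields the claimed asymptotic constant; the potential loss relative to a naïve summation stems from ensuring deterministic survival of enough colours on each side of the random split.
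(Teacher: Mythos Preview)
Your approach to the edge bound --- a degree-based vertex split, Molloy on the low-degree part, the vertex bound on the high-degree part, and a random colour partition justified by Chernoff --- is structurally the same as the paper's and works once the vertex bound is established.

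The vertex bound, however, has a genuine gap that you yourself flag but do not resolve. When you colour $N(v)$ ``vertex-by-vertex'', distinct vertices of $N(v)$ will in general receive distinct colours, since their lists need not share any common element. A vertex $u\in V(G)\setminus N[v]$ with $r$ neighbours in $N(v)$ may therefore lose up to $r$ colours from $L(u)$ in a single pass, and over the whole recursion $u$ can lose one colour for every neighbour that gets removed --- potentially $\deg_G(u)$ colours in total, which may vastly exceed the list size $k=(4\sqrt 2+o(1))\sqrt{n/\log n}$. There is no way to bound this depletion in terms of the number of passes, so the induction does not close; ``care is needed'' is doing all of the work here and none of the lifting.

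The paper sidesteps this by replacing degree with \emph{colour-degree} $\deg_L(v,c) = \lvert\{u : uv\in E(G),\, c\in L(u)\}\rvert$. If every colour-degree is at most $d(n)=\tfrac{\sqrt 2}{4}\sqrt{n\log n}$, a theorem of Alon and Assadi --- a colour-degree analogue of Molloy's bound, but with leading constant $8$ rather than $1+o(1)$ --- produces an $L$-colouring directly; this factor $8$ is exactly what converts the constant $2$ from the chromatic-number argument into $4\sqrt 2$. Otherwise some $\deg_L(v,c)>d(n)$, and one colours \emph{all} neighbours of $v$ having $c$ on their list with the single colour $c$, then deletes $c$ from every remaining list. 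This removes more than $d(n)$ vertices while depleting each surviving list by \emph{exactly one}, so an induction of the form $g(n)-g(n-d(n))\ge 1$ goes through just as in the chromatic-number proof. The essential ingredient you are missing is the Alon--Assadi theorem: without a base case phrased in terms of colour-degree, there is no inductive step that removes many vertices at the cost of only one colour.
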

\noindent
This result confirms Conjecture~6.1 from~\cite{CdKP20}, though we do not believe the given constants are tight and in particular they are worse than what we prove for the usual chromatic number.

Turning to a bound in terms of genus as studied by Gimbel and Thomassen \cite{GT00}, using \Cref{thm:listchi} we obtain the correct growth rate for the list chromatic number as well as sharpening the constant for chromatic number with an application of \Cref{thm:chi}. 
The key observation underlying the proof is that the genus and number of edges are at most a constant factor apart in the critical range.

\begin{corollary}\label{cor:genus}
	As $g\to\infty$, any triangle-free graph of genus at most $g$ has chromatic number at most $(3\cdot 6^{2/3} + o(1)) g^{1/3}/(\log g)^{2/3}$.

	As $g\to\infty$, any triangle-free graph of genus at most $g$ has list chromatic number at most $(12\cdot 6^{2/3} + o(1)) g^{1/3}/(\log g)^{2/3}$.

	The same bounds hold for graphs that can be embedded on a closed non-orientable surface of genus at most $2g$.
\end{corollary}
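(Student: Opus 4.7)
The plan is to combine Theorems~\ref{thm:chi} and \ref{thm:listchi} in their edge formulations with a classical topological edge bound and a standard degeneracy reduction. The key topological input is Euler's formula: since a triangle-free graph has every face of length at least $4$ in any $2$-cell embedding, any triangle-free graph of orientable genus at most $g$ satisfies $\ssize{E(G)} \le 2\ssize{V(G)} + 4g - 4$, and the same bound holds when the graph embeds in a non-orientable surface of genus at most $2g$ (both cases correspond to Euler genus at most $2g$).

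Given such a triangle-free graph $G$, I would set a threshold $k$ just above the target value, namely $k = \lceil (3 \cdot 6^{2/3} + \eps) g^{1/3}/(\log g)^{2/3}\rceil$ for the chromatic version (and $\lceil (12 \cdot 6^{2/3} + \eps) g^{1/3}/(\log g)^{2/3}\rceil$ for the list version), and iteratively delete a vertex of degree less than $k$ until reaching a subgraph $H$ with $\delta(H) \ge k$ (possibly empty). Because $H \subseteq G$, it is triangle-free and satisfies the same genus bound, hence also the Euler edge inequality. The usual degeneracy argument then yields $\chi(G) \le \max(k, \chi(H))$, and identically $\chi_{\ell}(G) \le \max(k, \chi_{\ell}(H))$.

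If $H$ is empty the threshold $k$ itself is the required bound. Otherwise $\delta(H) \ge k$ combined with the Euler edge bound gives $\ssize{V(H)} \le (8g - 8)/(k - 4) = o(g)$, and substituting back produces $\ssize{E(H)} \le (4 + o(1))g$---this is exactly the stated observation that the number of edges and the genus are at most a constant factor apart in the critical range. Applying Theorem~\ref{thm:chi} to $H$ yields $\chi(H) \le (3^{5/3} \cdot 4^{1/3} + o(1)) g^{1/3}/(\log g)^{2/3}$, and the arithmetic identity $3^{5/3} \cdot 4^{1/3} = 3 \cdot 6^{2/3}$ produces exactly the claimed constant. The list chromatic case is identical using Theorem~\ref{thm:listchi} together with $12 \cdot 3^{2/3} \cdot 4^{1/3} = 12 \cdot 6^{2/3}$.

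The main thing to be careful about is coordinating the $o(1)$ error terms so that the bound on $\chi(H)$ coming from Theorem~\ref{thm:chi} is genuinely at most $k$ once $g$ is large enough; a little slack in $\eps$, later sent to $0$, will handle this cleanly. The crucial conceptual point is simply that the topological quantity $4g$ emerging from the degeneracy-reduced core aligns perfectly with the edge-count constants of Theorems~\ref{thm:chi} and \ref{thm:listchi}, and no further combinatorial work is required.
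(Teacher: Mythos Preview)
Your proposal is correct and follows essentially the same approach as the paper's proof: a degeneracy reduction to a subgraph of large minimum degree, the Euler-formula bound $m \le (4+o(1))g$ for the reduced triangle-free subgraph, and then the edge versions of Theorems~\ref{thm:chi} and~\ref{thm:listchi}. The only cosmetic difference is that the paper uses the smaller threshold $d = g^{1/3}/(\log g)^{2/3}$ for the degeneracy step rather than your target value $k$, but this makes no difference to the argument.
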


Due to our \Cref{thm:chi}, there is now an asymptotic factor $\sqrt{2}$ between the best known upper bounds for chromatic number and Hall ratio. Cames van Batenburg et al.~\cite{CdKP20} conjectured that it is possible to remove this $\sqrt{2}$ for the fractional chromatic number, and suggested this may be possible for the chromatic number too. 
Our results establish bounds on the list chromatic number of triangle-free graphs that are tight up to the constant factor, but one might ask whether bounds of the same order hold for an even more general notion of graph colouring known as \emph{correspondence colouring} or \emph{DP-colouring}.
In \Cref{sec:further} we discuss these constant factors, related work, some nice conjectures, and the correspondence colouring version of the $\chi$-Ramsey question.

\section{Proof Ideas and Tools}\label{sec:Sketch}

We first outline the proof of the bound in terms of the number of vertices in \Cref{thm:chi}. The key idea underpinning our improvement is induction, splitting into cases depending on the maximum degree of the graph. If some vertex of the graph has large degree, then we give its neighbourhood (which is an independent set by triangle-freeness) one colour and colour the remainder of the graph by induction. Otherwise, the graph has small maximum degree and so we may apply a result bounding the chromatic number of a triangle-free graph in terms of its maximum degree. We will use a recent ground-breaking result of Molloy~\cite{Mol19}, which is a chromatic strengthening of Shearer's lower bound for the independence number.

\begin{theorem}[Molloy]\label{thm:Molloy}
	As $\Delta \to \infty$, any triangle-free graph of maximum degree $\Delta$ has \emph{(}list\emph{)} chromatic number at most $(1 + o(1)) \Delta/\log \Delta$.
\end{theorem}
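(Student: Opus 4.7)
The plan is to apply the semi-random ``nibble'' method, building a proper list colouring via iterated partial colourings. Fix $\eps > 0$ and assume each vertex has a list of size $k = \lceil (1+\eps)\Delta/\log \Delta \rceil$. The central step would be a wasteful colouring lemma: given a triangle-free graph with a list assignment $L$ satisfying $\ssize{L(v)} \ge (1+\eps)\deg(v)/\log\deg(v)$, produce a partial $L$-colouring so that, after deleting the coloured vertices and updating each remaining list $L'(v)$ by removing colours used at its neighbours, both the ratio $\ssize{L'(v)}/\log\deg'(v)$ stays at least $1+\eps/2$ and the maximum degree drops by a constant factor, all with high probability. Iterating this $\bigO(\log\Delta)$ times reduces to a graph of polylogarithmic maximum degree, which can be finished off with a direct application of the Lov\'asz local lemma.

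To prove the wasteful colouring lemma I would analyse the following randomised procedure: activate each vertex independently with some small probability $p$; each activated vertex picks a uniformly random colour from its current list; a colour is accepted at $v$ iff no neighbour of $v$ picks the same colour. The triangle-free hypothesis is what controls the expected loss of each colour from $L(v)$: a colour $c \in L(v)$ is removed only if some neighbour of $v$ successfully takes $c$, and because the neighbours of $v$ form an independent set, the events ``$u$ picks $c$ and keeps it'' for different $u \in N(v)$ are only weakly correlated (their rejection events depend on $N(u)$, which by triangle-freeness avoids the other neighbours of $v$). Combined with Talagrand-type concentration for the relevant sums, this would yield preservation of the key ratio first in expectation and then with high probability, and a uniform bound on the drop in degree.

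The main obstacle is pinning down the leading constant as $1+o(1)$ rather than some larger absolute constant (the analogous nibble arguments of Kim and Johansson produced roughly $9+o(1)$). Extracting the sharp constant requires a more refined analysis, for instance via entropy compression (showing that an unsuccessful run of the algorithm would produce an implausibly short encoding of the random bits driving it) or by a careful study of the hard-core distribution on independent sets in each local neighbourhood. This refined step is the essential technical innovation underlying Molloy's theorem, and I would not expect to obtain the tight constant without closely following the arguments in~\cite{Mol19}.
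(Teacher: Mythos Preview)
The paper does not prove \Cref{thm:Molloy}. It is quoted as a known result of Molloy~\cite{Mol19} and used as a black box in the proofs of \Cref{thm:chi,thm:listchi}; there is no argument for it in the paper to compare your proposal against.

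Your sketch is a reasonable outline of the history and shape of the argument in~\cite{Mol19}: the nibble with a wasteful partial colouring, the role of triangle-freeness in decorrelating the neighbours of a vertex, and the fact that the sharp constant $1+o(1)$ requires the entropy-compression refinement rather than the older Kim--Johansson analysis. But as you yourself say in the final paragraph, this is not a proof; you explicitly defer the essential step to~\cite{Mol19}. For the purposes of this paper that is exactly right --- the theorem is imported, not reproved --- so no proof is expected here, and your proposal should simply be replaced by the citation.
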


We are now in a position to sketch the proof of \Cref{thm:chi}. We will ignore all $o(1)$ terms. Let $G$ be an $n$-vertex triangle-free graph---we are trying to prove that $\chi(G) \le 2 \sqrt{n/ \log n}$ and will assume the result holds for all smaller $n$. Firstly, if $G$ has maximum degree at most $d(n) := \sqrt{n \log n}$, then Molloy's theorem immediately gives the result. Otherwise, some vertex $v$ of $G$ has degree greater than $d(n)$. Let $G'$ be $G$ with all neighbours of $v$ deleted. As $G$ is triangle-free, the neighbourhood of $v$ is an independent set, so $\chi(G) \le \chi(G') + 1$. Also, $n' = \ssize{G'} < n - d(n)$ and induction gives
\begin{equation*}
	\chi(G') \le 2 \sqrt{n'/\log n'}.
\end{equation*}
For this sketch, consider $\log n'$ and $\log n$ as identical and so
\begin{equation*}
	\chi(G') \le 2 \sqrt{\frac{n - d(n)}{\log n}} \le 2 \sqrt{\frac{n}{\log n}} - 1,
\end{equation*}
where the final inequality follows by squaring both sides and cancelling terms. As $\chi(G) \leqslant \chi(G') + 1$, we are done. The full proof has no new ideas, we merely have to overcome the technical challenge of the $o(1)$ terms. We do this in \Cref{sec:Proofs}, as well deriving the bound in terms of the number of edges.

Consider trying the same proof strategy for the list chromatic number, in pursuit of \Cref{thm:listchi}. If there is a vertex of large degree, then we cannot necessarily colour its neighbourhood with one colour as there may be no colour appearing on the lists of all its neighbours. In place of degree we use the notion of \emph{colour-degree}.

\begin{defn}
	Let $G$ be a graph with list-assignment $L$. For a vertex $v$ and a colour $c \in L(v)$, the \emph{colour-degree of $c$ at $v$} is
	\begin{equation*}
		\deg_{L}(v, c) = \ssize{\{u \colon uv \in E(G),\ c \in L(u)\}}.
	\end{equation*}
\end{defn}

Suppose that we have a graph $G$ and list assignment $L$ which assigns lists of size $k$ to the vertices of $G$.
If some colour-degree, say $\deg_{L}(v, c)$, is large, then we colour the neighbours of $v$ whose lists contain $c$ with colour $c$, remove $c$ from all other lists and delete the coloured vertices. What remains is a graph $G'$ of order $n - \deg_{L}(v, c)$, and crucially the graph $G$ admits an $L$-colouring provided $\chi_{\ell}(G') \le k-1$. 
Indeed, set $L'(u) = L(u)\setminus\{c\}$ for every vertex $u$ and note that if $\chi_{\ell}(G') \le k - 1$, then $G'$ is $L'$-colourable and the vertices in $V(G) \setminus V(G')$ can all be coloured with $c$.
We bound $\chi_{\ell}(G')$ from above by induction.
In order to carry out the same argument as above, we need an analogue of Molloy's theorem with $\Delta$ replaced by the maximum colour-degree, and such results (with larger leading constants) have been proved by Amini and Reed~\cite{AR08} and Alon and Assadi~\cite{AA20}.

\begin{theorem}[Alon--Assadi]\label{thm:AA}
	The following holds for all sufficiently large $d$. Let $G$ be a triangle-free graph with lists $L(v)$ for every vertex $v$. If, for every vertex $v$ and colour $c \in L(v)$,
	\begin{align*}
		& \ssize{L(v)} \ge 8d/ \log d, \\
		& \deg_{L}(v, c) \le d,
	\end{align*}
	then $G$ admits an $L$-colouring.
\end{theorem}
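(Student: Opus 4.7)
My plan is a semi-random ``nibble'' argument in the style of Molloy's proof of \Cref{thm:Molloy}, adapted to track colour-degree in place of maximum degree.

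First I would carry out one round of partial random colouring. Independently for each vertex $v$, with some small activation probability $p$ to be tuned, tentatively assign a colour $c(v)$ chosen uniformly at random from $L(v)$; commit to $c(v)$ only if no neighbour of $v$ receives the same tentative colour. Remove the committed vertices from $G$ and, for each remaining vertex $v$, strip from $L(v)$ every colour committed at a neighbour, producing a new instance $(G', L')$.

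Second I would show that in expectation the list size $\ssize{L(v)}$ and each colour-degree $\deg_{L}(v, c)$ shrink multiplicatively, and crucially that the colour-degree shrinks strictly faster. The triangle-free hypothesis is used precisely here: if two neighbours $u, u'$ of $v$ both carry $c$ in their lists, then $u$ and $u'$ are non-adjacent, so their tentative assignments of colour $c$ are independent, and either is overwhelmingly likely to commit because the colour $c$ is also rare in the lists around $u,u'$. This yields an effectively exponential rate of decrease for $\deg_{L}(v,c)$ relative to $\ssize{L(v)}$, so that the ratio $\ssize{L(v)}/\deg_{L}(v,c)$ improves in each round.

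Third I would establish concentration of $\ssize{L'(v)}$ and $\deg_{L'}(v,c)$ around their expectations via Talagrand's inequality, then apply the Lov\'{a}sz Local Lemma (whose dependency graph is controllable thanks to triangle-freeness) to obtain a single outcome where the desired bounds hold for every $v$ and $c$ simultaneously. Iterating this nibble for $\Theta(\log \log d)$ rounds grows $\ssize{L(v)}/\max_{c}\deg_{L}(v,c)$ large enough that a final greedy or Local Lemma pass produces the required $L$-colouring. The principal obstacle is the second step: quantifying with controllable error how triangle-freeness accelerates the decrease of colour-degree relative to list size, and propagating this improvement through the iteration to recover the leading constant~$8$ in the statement.
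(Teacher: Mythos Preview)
The paper does not prove \Cref{thm:AA}. It is quoted as a black-box result of Alon and Assadi~\cite{AA20} (with Amini and Reed~\cite{AR08} mentioned as an earlier source with a different constant), and is then \emph{applied} in the proofs of \Cref{thm:listchi}. There is therefore no proof in the paper to compare your proposal against.

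For what it is worth, your outline is in the right spirit for results of this type: the original proofs in~\cite{AR08,AA20} do proceed by iterated partial random colouring with concentration and the Local Lemma, tracking colour-degrees rather than ordinary degrees, and the role of triangle-freeness is indeed to control dependencies among neighbours of a given vertex. But the paper under review simply cites the theorem; it neither reproves it nor sketches an argument, so any detailed assessment of your nibble parameters or of the constant~$8$ would have to be made against~\cite{AA20}, not against this paper.
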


If the factor of $8$ could be replaced with $1 + o(1)$, then our bound for the list chromatic number would match those for the fractional and usual chromatic number, that is, be a factor $\sqrt{2}$ away from the bound for the Hall ratio. 
We discuss this further in \Cref{sec:further}.

The bound in terms of the number of edges given in \Cref{thm:listchi} follows from the bound in terms of the number of vertices, via a simple argument that resembles its counterpart for the chromatic number and uses a list-partitioning idea present in~\cite{CdKP20}. 
We give this argument in the next section.

\section{The Proofs}\label{sec:Proofs}

In this section we give the proofs of \Cref{thm:chi,thm:listchi}. We stress that for the bounds in terms of $n$ the main ideas appeared in \Cref{sec:Sketch}, and what remains is a technical exercise.

\begin{proof}[Proof of \Cref{thm:chi}]
	Let $f(x) = (2 + A(x)) \sqrt{x/ \log x}$ where $A = o(1)$ is smooth, non-negative and non-increasing (specified more precisely later). For the first stated bound it suffices to show that any $n$-vertex triangle-free graph $G$ has chromatic number at most $f(n)$. We will induct upon $n$, and we may choose $A$ so that the theorem holds for all $n \le 20$. Assume from now on that $n \ge 20$.
	
	First suppose that every vertex of $G$ has degree at most $d(n) = \sqrt{n \log n}$. Then, by \Cref{thm:Molloy} there is an $\eps(x) = o(1)$ such that
	\begin{align*}
		\chi(G) & \le (1 + \eps(n)) \frac{d(n)}{\log d(n)} \le (1 + \eps(n)) \frac{d(n)}{\log(n^{1/2})} \\
		& = 2(1 + \eps(n)) \sqrt{\frac{n}{\log n}}.
	\end{align*}
	Thus, we are done in this case provided 
	\begin{equation}\label{eq:eps}
		A(x) \geqslant 2 \varepsilon(x).
	\end{equation}
	In the second case there is some vertex $v$ with degree greater than $d(n)$. Let $G'$ be the graph obtained from $G$ by deleting all the neighbours of $v$. Then $G'$ has fewer than $n - d(n)$ vertices and
	\begin{equation*}
		\chi(G) \le \chi(G') + 1 \le f(n - d(n)) + 1,
	\end{equation*}
	where the second inequality follows by induction. Hence, to complete the proof we need 
	\begin{equation}\label{eq:f}
		f(n) - f(n - d(n)) \ge 1,
	\end{equation}
	for all $n \ge 20$. It remains to check that it is possible to choose an $A$ such that equations~\Cref{eq:eps,eq:f} hold. We will assume that $A$ decays sufficiently slowly so that equation~\Cref{eq:eps} holds.
	
	The function $\sqrt{x/\log x}$ is concave for $x \ge 6$. If we choose the non-negative function $A$ decaying sufficiently slowly, then $f$ ought to be concave too. Indeed, if we choose $A$ so that $\abs{A''(x)} \le 1/(10 x^{2})$, then a quick calculation\footnote{With $g(x) = \sqrt{x/\log x}$ we have $f'(x) = (2 + A(x))g'(x) + A'(x)g(x)$ and $g'(x)=\frac{\log x - 1}{2\sqrt{x (\log x)^{3}}}$.} shows that for all $x \ge 10$, $f''(x) < 0$ (note that $A' \le 0 \le A$).
	By concavity, for all $n \ge 20$,
	\begin{equation*}
		f(n) - f(n - d(n)) \ge f'(n) d(n) = \bigl(1 + \tfrac{A(n)}{2}\bigr)\bigl(1 - \tfrac{1}{\log n}\bigr) + nA'(n).
	\end{equation*}
	Choosing $A$ so that $\abs{A'(x)} \le 1/(x \log x)$ and $A(x) \ge 8/\log x$ gives equation~\Cref{eq:f}. One should worry that the conditions we have placed on the derivatives of $A$ might preclude it from tending to zero. Happily, integrating these shows that this is not the case.

	Now for the bound in terms of the number of edges. We apply the method of Gimbel and Thomassen~\cite{GT00}. For $d = ((m \log m)/3)^{1/3}$, let $V_{1} \subseteq V(G)$ be those vertices of degree at most $d$ and let $V_{2} = V(G) \setminus V_{1}$.
	
	The subgraph of $G$ induced by $V_{1}$ can be properly coloured with at most $(1 + o(1))d/\log d$ colours by \Cref{thm:Molloy}. On the other hand, $2m \geqslant \sum_{v \in V_{2}} \deg(v) \geqslant \ssize{V_{2}}d$ and so $\ssize{V_{2}} \leqslant 2m/d = 2 \cdot 3^{1/3} \cdot m^{2/3}/(\log m)^{1/3}$. 
	Applying the first part of this theorem we obtain
	\begin{displaymath}
		\chi(G) \le (1+o(1))\frac{d}{\log d} + (2+o(1))\sqrt{\frac{\ssize{V_{2}}}{\log \ssize{V_{2}}}} \le (3^{3/5} + o(1))\frac{m^{1/3}}{(\log m)^{2/3}}.
	\end{displaymath}
\end{proof}

Before we prove \Cref{thm:listchi}, we recall the Chernoff bound.

\begin{theorem}[{Chernoff Bound, \cite[Cor.~2.3]{JLR00}}]
	Fix $p \in (0, 1)$ and let the random variable $X \sim \mathrm{Bin}(k,p)$ be binomially distributed. 
	Then for $\eps\in(0,1)$,
	\begin{equation*}
		\Pr(X \le (1-\eps)kp) \le 2e^{-\eps^2 kp/3}.
	\end{equation*}
\end{theorem}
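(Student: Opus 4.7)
The plan is to use Chernoff's exponential-moment method in the classical way. First I would write $X = \sum_{i=1}^{k} X_i$ as a sum of independent Bernoulli$(p)$ variables and, for a parameter $t > 0$ to be chosen later, apply Markov's inequality to $e^{-tX}$ to obtain
\begin{equation*}
\Pr\bigl(X \le (1-\eps)kp\bigr) \le e^{t(1-\eps)kp}\,\mathbb{E}[e^{-tX}].
\end{equation*}
By independence, $\mathbb{E}[e^{-tX}] = (1 - p + pe^{-t})^{k}$, and the elementary inequality $1 + x \le e^{x}$ then gives $\mathbb{E}[e^{-tX}] \le \exp\bigl(kp(e^{-t}-1)\bigr)$.

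Next I would optimise over $t > 0$. Setting the derivative of the resulting exponent to zero forces $e^{-t} = 1-\eps$, so taking $t = -\log(1-\eps) > 0$ produces the bound $\exp\bigl(kp\bigl[-\eps - (1-\eps)\log(1-\eps)\bigr]\bigr)$. It only remains to control the bracketed quantity: from the expansion $(1-\eps)\log(1-\eps) = -\eps + \sum_{j\ge 2}\eps^{j}/(j(j-1))$ every term beyond $-\eps$ is positive, so $-\eps - (1-\eps)\log(1-\eps) \le -\eps^{2}/2$ for all $\eps \in (0,1)$.

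Combining these steps already gives $\Pr(X \le (1-\eps)kp) \le e^{-\eps^{2}kp/2}$, which is strictly stronger than the stated $2e^{-\eps^{2}kp/3}$. There is no serious obstacle here; the only piece of genuine analysis is the uniform Taylor estimate for $(1-\eps)\log(1-\eps)$ on $(0,1)$. The slightly weaker constant $1/3$ in the exponent and the leading factor $2$ in the statement are presumably convenient because the same method (with $t < 0$) yields the analogous upper-tail bound $\Pr(X \ge (1+\eps)kp) \le e^{-\eps^{2}kp/3}$, after which $2e^{-\eps^{2}kp/3}$ is a single clean estimate that simultaneously covers both tails.
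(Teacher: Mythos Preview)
Your argument is correct and is the standard exponential-moment derivation; in fact you end up with the sharper bound $e^{-\eps^{2}kp/2}$, which certainly implies the stated $2e^{-\eps^{2}kp/3}$. There is nothing to compare to, however: the paper does not prove this statement at all but simply quotes it from~\cite[Cor.~2.3]{JLR00} as a known tool, so any correct proof is acceptable here.
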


\begin{proof}[Proof of \Cref{thm:listchi}]
	Let $g(x) = (4 \sqrt{2} + B(x)) \sqrt{x/\log x}$ where $B = o(1)$ is smooth, non-negative and non-increasing.  We may choose $B$ so that \Cref{thm:listchi} holds for all small $n$. Assume from now on that $n \ge n_{0}$ for some fixed $n_{0}$.
	
	Associate with each vertex $v$ a list $L(v)$ of colours such that $\ssize{L(v)} \ge g(n)$. It suffices to show that there is a proper colouring of $G$ from these lists. First suppose all colour-degrees are at most $d(n) = \sqrt{2}/4 \cdot \sqrt{n \log n}$. Now
	\begin{equation*}
		\frac{8 d(n)}{\log d(n)} \le \frac{8d(n)}{\log(n^{1/2})} \le g(n),
	\end{equation*}
	where the first inequality holds provided $n_{0} \ge e^{8}$. Provided $n_{0}$ is large enough, \Cref{thm:AA} guarantees that there is an $L$-colouring of $G$.
	
	Otherwise, there is some vertex $v$ and colour $c \in L(v)$ with $\deg_{L}(v, c) > d(n)$. Let $G'$ be the graph obtained from $G$ by deleting all the neighbours of $v$ with colour $c$ on their list. Then $G'$ has fewer than $n - d(n)$ vertices, and $G$ admits an $L$-colouring provided $\chi_{\ell}(G') \le g(n) - 1$. 
	By induction, we have $\chi_\ell(G') \le g(n-d(n))$ and hence to show $\chi_\ell(G)\le g(n)$ it suffices to prove that $g(n) - g(n - d(n)) \ge 1$.
	We finish as in the proof of \Cref{thm:chi}: choosing the non-negative function $B$ so that $\abs{B''(x)} \le 1/(10x^{2})$ guarantees that $g$ is concave for $x \ge 10$. Thus, for $n \ge 20$
	\begin{align*}
		g(n) - g(n - d(n)) &\ge g'(n) d(n) \\
		& = \bigl(1 + B(n) \cdot \tfrac{\sqrt{2}}{8}\bigr) \bigl(1 - \tfrac{1}{\log n}\bigr) + nB'(n) \cdot \tfrac{\sqrt{2}}{4}.
	\end{align*}
	Choosing $B$ so that $\abs{B'(x)} \le 1/(x \log x)$ and $B(x) \ge 32/\log x$ gives $\chi_{\ell}(G) \le g(n)$, as required.
	
	For the second part of \Cref{thm:listchi}, we partition $V(G)$ according to the largest colour-degree at each vertex. Let $\eps>0$ be arbitrary and without loss of generality suppose that $\eps<1/2$ and $m$ is sufficiently large.
	Let $G$ be a triangle-free graph on $n$ vertices with $m$ edges, and consider a list assignment $L$ giving each vertex a list of size at least
	\begin{equation*}
		k = (1 + \eps)^2 \cdot 12 \cdot 3^{2/3} \frac{m^{1/3}}{(\log m)^{2/3}}.
	\end{equation*}
	For $d = ((m\log m)/24)^{1/3}$, consider the partition of $V(G)$ given by 
	\begin{align*}
		V_1 & = \{v \in V(G) \colon \max_{c \in L(v)} \deg_L(v,c) \le d\}, \\
		V_2 & = V(G) \setminus V_1.
	\end{align*}
	Every vertex in $V_2$ has some colour-degree at least $d$ and so has degree at least $d$. Hence, degree-counting gives $\ssize{V_{2}} \le 2m/d$.
	We partition the colours in $\bigcup_{v \in V(G)} L(v)$ into two parts $L_{1}$ and $L_{2}$ and let $L_{i}(v) = L(v) \cap L_{i}$. We insist on using the colours in $L_{1}$ to colour $G[V_{1}]$ and those in $L_{2}$ to colour $G[V_{2}]$. Assume for now that every vertex $v$ satisfies:
	\begin{equation}\label{eq:minL}
		\begin{aligned}
		\ssize{L_1(v)} & \ge (4 \cdot 3^{2/3} + \eps)\frac{m^{1/3}}{(\log m)^{2/3}}, \\
		\ssize{L_{2}(v)} & \ge (8 \cdot 3^{2/3} + \eps) \frac{m^{1/3}}{(\log m)^{2/3}}.
		\end{aligned}
	\end{equation}
	For every vertex $v \in V_1$ and $c \in L_1(v)$: $\deg_{L_1}(v, c) \le \deg_L(v, c) \le d$ and
	\begin{equation*}
		\frac{8d}{\log d} \le (4\cdot 3^{2/3}+o(1))\frac{m^{1/3}}{(\log m)^{2/3}} \le \ssize{L_1(v)}.
	\end{equation*}
	By \Cref{thm:AA}, $G[V_1]$ is $L_{1}$-colourable (provided $m$ is large enough). Next, by the first part of this theorem,
	\begin{equation*}
		\chi_\ell(G[V_2])\le (4+o(1))\sqrt{\frac{2\ssize{V_2}}{\log\ssize{V_2}}} \le (8\cdot 3^{2/3} + o(1))\frac{m^{1/3}}{(\log m)^{2/3}},
	\end{equation*}
	so $G[V_2]$ is $L_2$-colourable. As $L_1$ and $L_2$ are disjoint sets of colours, these colourings can be combined to give an $L$-colouring of $G$.
	
	We finally check that there is a partition $L_1 \cup L_2$ of the colours for which~\Cref{eq:minL} holds. Each colour $c \in \bigcup_{v \in V(G)} L(v)$ is independently placed in $L_{1}$ with probability $1/3$ and is otherwise placed in $L_{2}$. By the Chernoff bound, each vertex $v\in V(G)$ has
	\begin{align*}
		\Pr\biggl(\ssize{L_1(v)} \le (4\cdot 3^{2/3} + \eps)\frac{m^{1/3}}{(\log m)^{2/3}}\biggr) & \le \Pr(\ssize{L_1(v)} \le (1-\eps)k/3), \\
		& \le 2e^{-\eps^2k/9}.
	\end{align*}
	Similarly, each vertex $v$ has
	\begin{equation*}
		\Pr\biggl(\ssize{L_2(v)} \le (8\cdot 3^{2/3} + \eps)\frac{m^{1/3}}{(\log m)^{2/3}}\biggr) \le 2e^{-2\eps^2k/9}.
	\end{equation*}
	
	We may assume that $G$ has no vertices of degree $0$ or $1$, as any such vertices can be removed before colouring $G$ and compatible colours found when they are re-added. 
	Then $G$ has at most $m$ vertices, and so $k > n^{1/3}/(\log n)^{2/3}$ and $2e^{-\eps^2k/9} = o(1/n)$. A union bound over the $n$ vertices of $G$ now gives that there is a partition of the lists of colours that satisfies~\Cref{eq:minL}.
\end{proof}

We conclude this section with the short proof of \Cref{cor:genus}, which is simply a slightly more precise version of the argument given in~\cite{GT00}. 

\begin{proof}[Proof of \Cref{cor:genus}]
	Let $G$ be a graph on $n$ vertices, with $m$ edges and genus at most $g$. We may assume that $G$ is connected.
	By removing vertices of degree at most $d = g^{1/3}(\log g)^{-2/3}$, which can then be coloured as they are re-added, we may assume that $G$ has minimum degree at least $d$. 
	Then the number $m$ of edges of $G$ satisfies $2m \ge dn$, and hence as $g\to\infty$ we have $n = o(m)$.
	
	Now consider an embedding of $G$ on an orientable surface of genus $g$ such that the drawing of $G$ has $r$ regions. Since $G$ is triangle-free, each region is surrounded by at least four edges. Also, each edge bounds at most two regions and so $r\le m/2$.
	Euler's formula gives $n - m/2 \ge n - m + r \ge 2-2g$, and hence $m \le (4+o(1))g$. 
	The corollary now follows by applications of \Cref{thm:chi,thm:listchi}. 
	For non-orientable genus the argument is the same, but we must use Euler's formula in the form $n-m+r \ge 2 - k$ where $k$ is the non-orientable genus.
\end{proof}

\section{Related research and open problems}\label{sec:further}

\subsection{Fractional colouring}
Cames van Batenburg et al.~\cite[Conjs.~4.3, 4.4]{CdKP20} conjectured that upper bounds for the fractional chromatic number in terms of $n$ and $m$ should match Shearer's bound on the Hall ratio in triangle-free graphs.
Recall that a fractional colouring of weight at most $k$ is a probability distribution on independent sets such that for every vertex $v$ we have $\Pr(v\in I)\ge 1/k$ and so it can be particularly useful to study distributions with a `local' lower bound on $\Pr(v\in I)$ that depends on parameters such as $\deg(v)$.

In a triangle-free graph on $n$ vertices, taking a uniform random neighbourhood gives $\Pr(v\in I) = \deg(v)/n$ and combining this distribution with one derived from a suitable local version of Molloy's theorem should perform well. 
A local fractional colouring result from~\cite{DJKP18} associates to each vertex $v$ of a triangle-free graph a subset $w(v)$ of the positive reals of measure $1$ such that $w(u)$ and $w(v)$ are disjoint for edges $uv$, and $w(v) \subset [0,m_v)$ for some $m_v=(1+o(1))\deg(v)/\log \deg(v)$. Choosing a positive real number at random with a non-increasing density function such as 
\[ p(r) = \max\left\{0,\sqrt{\frac{n}{\log n}}-\frac{r}{2}\frac{n}{\log n}\right\} \] 
gives a random independent set $\{v \colon r\in w(v)\}$ which is more likely to contain vertices of lower degree. 
Combining this with the distribution obtained by taking the neighbourhood of a uniformly random vertex, one recovers the fractional case of \Cref{thm:chi} (originally proved in~\cite{CdKP20} by a slightly more involved argument). 

Kelly and Postle~\cite{KP18} conjectured an improved (and more natural) version of a distribution which favours independent sets containing low degree vertices, and specifically noted that combining this with the `random neighbourhood' distribution matches Shearer's bound.
Their conjecture is that every triangle-free graph admits a probability distribution on independent sets such that for every vertex $v$, $\Pr(v\in I) \ge (1-o(1))\log\deg(v)/\deg(v)$.

\subsection{List and correspondence colouring}
A generalisation of list colouring known as \emph{correspondence colouring} or \emph{DP-colouring} permits an arbitrary matching of colours from $L(u)$ to $L(v)$ to be `forbidden' at the edge $uv$. 
Formally, given a list assignment $L$ for a graph $G$, we consider the sets $\hat L(v) = \{ (v, c) : c\in L(v) \}$ and a graph $H$ on vertex set $\bigcup_{v\in V(G)}\hat L(v)$. 
To form $H$ we put a clique on each $\hat L(v)$ and for all edges $uv\in E(G)$ and colours $c\in L(u)\cap L(v)$ we connect each $(u,c)$ to $(v,c)$. This gives a \emph{cover graph} such that independent sets in $H$ of size $|V(G)|$ in $H$ are in 1-to-1 correspondence with the $L$-colourings of $G$.
Correspondence colouring arises when we relax the requirements on the edges of $H$ between each $\hat L(u)$ and $\hat L(v)$, instead allowing an arbitrary matching between $\hat L(u)$ and $\hat L(v)$ for each edge $uv$ of $G$. 
The \emph{correspondence chromatic number} of $G$, denoted $\chi_c(G)$, is then the least $k$ such that whenever we start with a list assignment with lists of size $k$ and construct a cover $H$ in this fashion with arbitrary matchings, the resulting $H$ contains an independent set of size $|V(G)|$. 
This definition and comparison to list colouring shows that we have $\chi_\ell(G)\le\chi_c(G)$ for any graph $G$.
Correspondence colouring is well-studied in our setting, for example Bernshteyn~\cite{Ber19} showed that \Cref{thm:Molloy} holds for the correspondence chromatic number. 
Going further, Cambie and Kang~\cite{CK20} conjectured a version of \Cref{thm:AA} for correspondence colouring and with $8$ replaced by $1+o(1)$, having proved this in the special case of bipartite graphs. 

Local versions of the results we rely on are natural in the settings of list and correspondence colouring, where the necessary lower bound on $|L(v)|$ is a function of $\deg(v)$. 
See~\cite{BKNP18,DJKP18,DKPS20} for bounds in terms of degrees, and~\cite[Conj.~9.3.2]{Kel19} for a conjectured bound in terms of colour-degrees for list colouring. 
It is natural to combine~\cite[Conj.~9.3.2]{Kel19} and~\cite[Conj.~4]{CK20} and propose a local colour-degree version of \Cref{thm:Molloy} in the case of correspondence colouring (though some care with a minimum list size is necessary, see~\cite{DJKP18}).

It is interesting to note that the solution to the `$\chi_c$-Ramsey' problem for triangle-free graphs follows easily (up to a constant factor) from results of Bernshteyn~\cite{Ber16,Ber19} and Kr\'a\v{l}, Pangr\'ac, and Voss~\cite{KPV05}. 
The independent works~\cite{Ber16,KPV05} contain the lower bound $\chi_c(G) \ge (1/2-o(1))d/\log d$ for arbitrary graphs $G$ of average degree $d$, so a balanced $n$-vertex complete bipartite graph has $\chi_c(K_{\lfloor n/2\rfloor,\lceil n/2\rceil})\ge (1/4-o(1))n/\log n$.
Then in~\cite{Ber19} Bernshteyn gave the strengthening of \Cref{thm:Molloy} to correspondence colouring and, since the maximum degree is at most the number of vertices, showed that when $G$ is an $n$-vertex triangle-free graph we have $\chi_c(G) \le (1+o(1)) n/\log n$. This is therefore tight up to a factor of at most $4$. This correspondence colouring problem behaves very differently from the list colouring problem: balanced complete bipartite graphs are perhaps the extremal example and the extremal value is $\Theta(n/\log n)$ rather than its square root.

\subsection{Relaxing the triangle-free condition}

Colouring graphs with sparse neighbourhoods is natural generalisation of the problems we mention here. 
A triangle-free graph has independent neighbourhoods, but this can be relaxed to neighbourhoods inducing a bounded number of edges, or bounded (fractional) chromatic number, Hall ratio or clique number\footnote{Consider a sequence of assumptions inspired by the inequalities $\omega \le \rho \le \chi_f\le \chi$ in subgraphs induced by neighbourhoods.}.
A classic problem in Ramsey theory is to upper bound the Hall ratio of $K_4$-free graphs in terms of the number of vertices, where the best-known bound is due to Li, Rousseau, and Zang~\cite{LRZ01}. 
We do not dwell on the $\chi$-Ramsey version of this problem as there is still a polynomial gap between this upper bound and the lower bound due to Bohman~\cite{Boh09} proved by analysing the $K_4$-free process, so improving the constant factor with our methods is an unedifying prospect.

We focus on the $\chi$-Ramsey question for triangle-free graphs because known upper and lower bounds on the Hall ratio differ by only a constant factor, and we now match the growth rate for the list chromatic number too.
Much less is known for more general sparse neighbourhood conditions and there are usually large gaps between known upper and lower bounds on Hall ratio. 
Notably, the case of graphs in which neighbourhoods induce a bounded number of edges was settled up to a small constant factor in~\cite{DJKP21}.
One can always transfer upper bounds on the Hall ratio to ones on chromatic number by iteratively pulling out large independent sets as colour classes (see~\cite[pp.~124--5]{JT94} or~\cite[Lem.~4.1]{CdKP20}), and this raises the question of whether our methods can improve upon such arguments.
Broadly, the answer is yes as generalisations of \Cref{thm:Molloy} for these settings with good leading asymptotic constants have been given in~\cite{AIS19a,DKPS20} which means that one can prove analogues of our \Cref{thm:chi} with the same method.
Rather than chasing constants in the upper bounds on chromatic number, we suggest that it would be interesting to focus on improving upper bounds for the Hall ratio, and to find good lower bounds. 
Both of these are deep and challenging problems.

The situation for list chromatic number is rather different, however, as there is no obvious, generic way to transfer bounds on the Hall ratio to list chromatic number without losing some factor of $(\log n)^c$ (see e.g.~\cite[Thm.~6.4]{CdKP20} for an argument that can be generalised). 
The `$\chi_\ell$-Ramsey' question for graphs in which neighbourhoods induce a bounded number of edges was in fact already asked in~\cite{DJKP21}.
Our proof of \Cref{thm:listchi} shows how one might match the growth rate of Hall ratio bounds in this setting and others, but we do not have the necessary analogues of \Cref{thm:AA} (which gives a bound in terms of colour-degree) in the more general sparse neighbourhood settings. 
The $\chi_\ell$-Ramsey problem motivates the pursuit of such results, though they are certainly worth investigating in their own rights too.

\section{Acknowledgements}

We would like to thank Ross Kang and Jean-S\'ebastien Sereni for organising the March 2021 workshop `Entropy Compression and Related Methods', where we began this work.
We thank Ross Kang in particular for sharing the problems we study here.

We are grateful to the anonymous referees for their comments, in particular for suggesting an improvement to the constant in the second bound in \Cref{thm:listchi}.

\bibliographystyle{habbrv}
\bibliography{../colouring-abbrv}

\begin{thebibliography}{10}
\expandafter\ifx\csname url\endcsname\relax
  \def\url#1{\texttt{#1}}\fi
\expandafter\ifx\csname doi\endcsname\relax
  \def\doi#1{\burlalt{\textsc{doi}:\detokenize{#1}}{https://dx.doi.org/#1}}\fi
\expandafter\ifx\csname
  urlprefix\endcsname\relax\def\urlprefix{\textsc{url:}}\fi
\expandafter\ifx\csname href\endcsname\relax
  \def\href#1#2{#2}\fi
\expandafter\ifx\csname burlalt\endcsname\relax
  \def\burlalt#1#2{\href{#2}{#1}}\fi

\bibitem{AIS19a}
D.~Achlioptas, F.~Iliopoulos, and A.~Sinclair.
\newblock Beyond the {{Lov\'asz Local Lemma}}: {{Point}} to {{Set
  Correlations}} and {{Their Algorithmic Applications}}.
\newblock In {\em 2019 {{IEEE}} 60th {{Annual Symposium}} on {{Foundations}} of
  {{Computer Science}} ({{FOCS}})}, pages 725--744, {Baltimore, MD, USA}, Nov.
  2019. {IEEE}.
\newblock \doi{10.1109/FOCS.2019.00049}.

\bibitem{AKS80}
M.~Ajtai, J.~Koml{\'o}s, and E.~Szemer{\'e}di.
\newblock A note on {R}amsey numbers.
\newblock {\em J. Combin. Theory Ser. A}, 29(3):354--360, Nov. 1980.
\newblock \doi{10.1016/0097-3165(80)90030-8}.

\bibitem{AA20}
N.~Alon and S.~Assadi.
\newblock Palette {{Sparsification Beyond}} {{$(\Delta + 1)$}} {{Vertex
  Coloring}}.
\newblock In J.~Byrka and R.~Meka, editors, {\em Approximation,
  {{Randomization}}, and {{Combinatorial Optimization}}. {{Algorithms}} and
  {{Techniques}} ({{APPROX}}/{{RANDOM}} 2020)}, volume 176 of {\em Leibniz
  {{International Proceedings}} in {{Informatics}} ({{LIPIcs}})}, pages
  6:1--6:22, {Dagstuhl, Germany}, 2020. {Schloss Dagstuhl--Leibniz-Zentrum
  f\"ur Informatik},
  \burlalt{arXiv:2006.10456}{https://arxiv.org/abs/2006.10456}.

\bibitem{ASE92}
N.~Alon, J.~H. Spencer, and P.~Erd{\H o}s.
\newblock {\em The Probabilistic Method}.
\newblock Wiley-{{Interscience}} Series in Discrete Mathematics and
  Optimization. {Wiley}, {New York}, 1992.

\bibitem{AR08}
O.~Amini and B.~Reed.
\newblock List {{Colouring Constants}} of {{Triangle Free Graphs}}.
\newblock {\em Electron. Notes Discrete Math.}, 30:135--140, Feb. 2008.
\newblock \doi{10.1016/j.endm.2008.01.024}.

\bibitem{Ber16}
A.~Bernshteyn.
\newblock The asymptotic behavior of the correspondence chromatic number.
\newblock {\em Discrete Math.}, 339(11):2680--2692, Nov. 2016.
\newblock \doi{10.1016/j.disc.2016.05.012}.

\bibitem{Ber19}
A.~Bernshteyn.
\newblock The {{Johansson}}-{{Molloy}} theorem for {{DP}}-coloring.
\newblock {\em Random Structures Algorithms}, 54(4):653--664, July 2019.
\newblock \doi{10.1002/rsa.20811}.

\bibitem{Boh09}
T.~Bohman.
\newblock The triangle-free process.
\newblock {\em Adv. Math.}, 221(5):1653--1677, Aug. 2009.
\newblock \doi{10.1016/j.aim.2009.02.018}.

\bibitem{BK21}
T.~Bohman and P.~Keevash.
\newblock Dynamic concentration of the triangle-free process.
\newblock {\em Random Structures Algorithms}, 58(2):221--293, 2021.
\newblock \doi{10.1002/rsa.20973}.

\bibitem{BKNP18}
M.~Bonamy, T.~Kelly, P.~Nelson, and L.~Postle.
\newblock Bounding {$\chi$} by a fraction of {$\Delta$} for graphs without
  large cliques.
\newblock Mar. 2018,
  \burlalt{arXiv:1803.01051}{https://arxiv.org/abs/1803.01051}.

\bibitem{CK20}
S.~Cambie and R.~J. Kang.
\newblock Independent transversals in bipartite correspondence-covers.
\newblock {\em Can. Math. Bulletin}, pages 1--13, Dec. 2021.
\newblock \doi{10.4153/S0008439521001004}.

\bibitem{CdKP20}
W.~{Cames van Batenburg}, R.~{de Joannis de Verclos}, R.~J. Kang, and F.~Pirot.
\newblock Bipartite {{Induced Density}} in {{Triangle}}-{{Free Graphs}}.
\newblock {\em Electron. J. Combin.}, 27(2.34), May 2020.
\newblock \doi{10.37236/8650}.

\bibitem{DJKP18}
E.~Davies, R.~{de Joannis de Verclos}, R.~J. Kang, and F.~Pirot.
\newblock Coloring triangle-free graphs with local list sizes.
\newblock {\em Random Structures Algorithms}, 57(3):730--744, 2020.
\newblock \doi{10.1002/rsa.20945}.

\bibitem{DJKP21}
E.~Davies, R.~{de Joannis de Verclos}, R.~J. Kang, and F.~Pirot.
\newblock Occupancy fraction, fractional colouring, and triangle fraction.
\newblock {\em J. Graph Theor.}, 97(4):557--568, 2021.
\newblock \doi{10.1002/jgt.22671}.

\bibitem{DKPS20}
E.~Davies, R.~J. Kang, F.~Pirot, and J.-S. Sereni.
\newblock Graph structure via local occupancy.
\newblock Mar. 2020,
  \burlalt{arXiv:2003.14361}{https://arxiv.org/abs/2003.14361}.

\bibitem{EH85}
P.~Erd\H{o}s and A.~Hajnal.
\newblock Chromatic number of finite and infinite graphs and hypergraphs.
\newblock {\em Discrete Math.}, 53:281--285, Mar. 1985.
\newblock \doi{10.1016/0012-365X(85)90148-7}.

\bibitem{Erd67}
P.~Erd{\H o}s.
\newblock Some remarks on chromatic graphs.
\newblock {\em Colloq. Math.}, 16:253--256, 1967.
\newblock \doi{10.4064/cm-16-1-253-256}.

\bibitem{FPMS20}
G.~Fiz~Pontiveros, S.~Griffiths, and R.~Morris.
\newblock The triangle-free process and the {R}amsey number {$R(3,k)$}.
\newblock {\em Mem. Amer. Math. Soc.}, 263(1274), 2020.
\newblock \doi{10.1090/memo/1274}.

\bibitem{GT00}
J.~Gimbel and C.~Thomassen.
\newblock Coloring triangle-free graphs with fixed size.
\newblock {\em Discrete Math.}, 219(1):275--277, May 2000.
\newblock \doi{10.1016/S0012-365X(00)00087-X}.

\bibitem{JLR00}
S.~Janson, T.~{\L}uczak, and A.~Rucinski.
\newblock {\em Random Graphs}.
\newblock Wiley-{{Interscience}} Series in Discrete Mathematics and
  Optimization. {Wiley}, {New York}, 2000.

\bibitem{JT94}
T.~R. Jensen and B.~Toft.
\newblock {\em Graph Coloring Problems}.
\newblock John Wiley \& Sons, Ltd, 1994.
\newblock \doi{10.1002/9781118032497}.

\bibitem{Kel19}
T.~Kelly.
\newblock {\em Cliques, {{Degrees}}, and {{Coloring}}: {{Expanding}} the
  {$\omega$}, {{$\Delta$}}, {$\chi$} Paradigm}.
\newblock PhD thesis, University of Waterloo, 2019.
\newblock \urlprefix\url{http://hdl.handle.net/10012/14862}.

\bibitem{KP18}
T.~Kelly and L.~Postle.
\newblock Fractional coloring with local demands.
\newblock Nov. 2018,
  \burlalt{arXiv:1811.11806}{https://arxiv.org/abs/1811.11806}.

\bibitem{Kim95a}
J.~H. Kim.
\newblock The {{Ramsey}} number {$R(3, t)$} has order of magnitude {$t^2/\log
  t$}.
\newblock {\em Random Structures Algorithms}, 7(3):173--207, Oct. 1995.
\newblock \doi{10.1002/rsa.3240070302}.

\bibitem{KPV05}
D.~Kr{\'a}{\v l}, O.~Pangr{\'a}c, and H.-J. Voss.
\newblock {A note on group colorings}.
\newblock {\em J. Graph Theor.}, 50(2):123--129, 2005.
\newblock \doi{10.1002/jgt.20098}.

\bibitem{LRZ01}
Y.~Li, C.~C. Rousseau, and W.~Zang.
\newblock Asymptotic {{Upper Bounds}} for {{Ramsey Functions}}.
\newblock {\em Graphs Combin.}, 17(1):123--128, Mar. 2001.
\newblock \doi{10.1007/s003730170060}.

\bibitem{Mol19}
M.~Molloy.
\newblock The list chromatic number of graphs with small clique number.
\newblock {\em J. Combin. Theory Ser. B}, 134:264--284, Jan. 2019.
\newblock \doi{10.1016/j.jctb.2018.06.007}.

\bibitem{Nil00}
A.~Nilli.
\newblock Triangle-free graphs with large chromatic numbers.
\newblock {\em Discrete Math.}, 211(1):261--262, Jan. 2000.
\newblock \doi{10.1016/S0012-365X(99)00109-0}.

\bibitem{PT94}
S.~Poljak and Z.~Tuza.
\newblock Bipartite {{Subgraphs}} of {{Triangle}}-{{Free Graphs}}.
\newblock {\em SIAM J. Discrete Math.}, 7(2):307--313, May 1994.
\newblock \doi{10.1137/S0895480191196824}.

\bibitem{She83}
J.~B. Shearer.
\newblock A note on the independence number of triangle-free graphs.
\newblock {\em Discrete Math.}, 46(1):83--87, 1983.
\newblock \doi{10.1016/0012-365X(83)90273-X}.

\end{thebibliography}
\end{document}